\def\Z{\mathbb Z}
\def\N{\mathbb N}
\def\Q{\mathbb Q}
\def\cS{\mathcal S}
\def\cC{\mathcal C}
\def\ff{{\mathfrak f}}
\def\1{{\bf 1}}
\def\pmod #1{\ ({\rm{mod}}\ #1)}
\def\maj {{\rm maj}\,}
\def\inv {{\rm inv}\,}
\def\des {{\rm des}\,}
\def\qbinom #1#2#3{{\genfrac{[}{]}{0pt}{}{#1}{#2}}_{#3}}
\theoremstyle{plain}
\newtheorem{theorem}{Theorem}
\newtheorem{lemma}{Lemma}
\theoremstyle{definition}
\theoremstyle{remark}
\begin{document}
\title
{A $q$-analogue of Wilson's congruence}

\author{Hao Pan}
\address {School of Applied Mathematics, Nanjing University of Finance and Economics, Nanjing 210023, People's Republic of China}
\email{haopan79@zoho.com}

\author{Yu-Chen Sun}
\address {Medical School, Nanjing
University, Nanjing 210093, People's Republic of China}
\email{syc@smail.nju.edu.cn}

\keywords{Wilson's congruence, permutation cycle, major index}
\subjclass[2010]{Primary 05A30; Secondary 05A05, 05A10,	11A07}
 \begin{abstract}
 Let $\cC_n$ be the set of all permutation cycles of length $n$ over $\{1,2,\ldots,n\}$.
Let
$$
\ff_n(q):=\sum_{\sigma\in\cC_{n+1}}q^{\maj\sigma}
$$
be a $q$-analogue of the factorial $n!$, where $
\maj$ denotes the major index. We prove a $q$-analogue of Wilson's congruence
$$
\ff_{n-1}(q)\equiv\mu(n)\pmod{\Phi_n(q)},
$$
where $\mu$ denotes the M\"obius function and $\Phi_n(q)$ is the $n$-th cyclotomic polynomial.
\end{abstract}
\maketitle

\section{Introduction}
\setcounter{lemma}{0} \setcounter{theorem}{0}
\setcounter{equation}{0}

For each $n\in\N=\{0,1,2,\ldots\}$, define the $q$-integer
$$
[n]_q:=\frac{1-q^n}{1-q}.
$$
The $q$-integer evidently is a $q$-analogue of the original integer, since
$$
\lim_{q\to 1}[n]_q=\lim_{q\to 1}(1+q+\cdots+q^{n-1})=n.
$$

Suppose that $p$ is a prime. The well-known Lucas congruence says that
\begin{equation}
\binom{ap+b}{cp+d}\equiv\binom{a}{c}\binom{b}{d}\pmod{p}
\end{equation}
where $a,b,c,d\in\N$ and $0\leq b,d\leq p-1$. 
A $q$-analogue of the binomial coefficients
is the $q$-binomial coefficient, which is given by
$$
\qbinom{n}{k}q:=\frac{[n]_q!}{[k]_q![n-k]_q!}
$$
for $n,k\in\N$ with $n\geq k$,
where $$[n]_q!:=[1]_q[2]_q\cdots[n]_q
$$ is the $q$-factorial. Then we have the following $q$-analogue of Lucas' congruence (cf. \cite{Ol65})
\begin{equation}\label{qLucasp}
\qbinom{ap+b}{cp+d}q\equiv\binom{a}{c}\cdot\qbinom{b}{d}q\pmod{[p]_q},
\end{equation}
where the above congruence is considered over the polynomial ring $\Z[q]$.
In fact, (\ref{qLucasp}) can be extended to
\begin{equation}
\qbinom{an+b}{cn+d}q\equiv\binom{a}{c}\cdot\qbinom{b}{d}q\pmod{\Phi_n(q)},
\end{equation}
where 
$$
\Phi_n(q):=\prod_{\substack{1\leq k\leq n\\
(k,n)=1}}(q-e^{2\pi i\cdot\frac{k}{n}})
$$
denotes the $n$-th cyclotomic polynomial.

Another two classical congruences in number theory is the Fermat congruence
\begin{equation}
a^{p-1}\equiv 1\pmod{p}
\end{equation}
for each prime $p$  and $1\leq a\leq p-1$, and the Wilson congruence
\begin{equation}
(p-1)!\equiv -1\pmod{p}
\end{equation}
for each prime $p$. Fermat's congruence also has its $q$-analogue as follows:
\begin{equation}
\prod_{k=1}^{p-1}[a]_{q^k}\equiv 1\pmod{[p]_q}.
\end{equation}
In fact, since $q^a-q^b=q^b(1-q^{a-b})$, 
for each prime $p$,
we have $q^{a}\equiv q^b\pmod{[p]_q}$ if and only if $a\equiv b\pmod{p}$. So, if $p\nmid a$, then
\begin{equation}
\prod_{k=1}^{p-1}[a]_{q^k}=\frac1{\prod_{k=1}^{p-1}(1-q^k)}
\prod_{k=1}^{p-1}(1-q^{ak})\equiv\frac1{\prod_{k=1}^{p-1}(1-q^k)}
\prod_{j=1}^{p-1}(1-q^{j})
=1\pmod{[p]_q}.
\end{equation}
Similarly, we can get
\begin{equation}
\prod_{k=1}^{n-1}[a]_{q^k}\equiv 1\pmod{\Phi_n(q)}
\end{equation}
provided that $(a,n)=1$.

Unfortunately, seemingly there exists no suitable $q$-analogue of Wilson's congruence for the $q$-factorial $[p-1]_q!$. For examples, we have
$$
[6]_q!\equiv 3+3q-4q^3-6q^4-4q^5\pmod{[7]_q}.
$$
Alternatively, in \cite{CP08}, Chapman and Pan gave a partial $q$-analogue of Wilson's congruence for those prime $p>3$ with $p\equiv3\pmod{4}$:
\begin{equation}\label{WilsonCP}
\prod_{k=1}^{p-1}[k]_{q^k}\equiv-1\pmod{[p]_q}.
\end{equation}
However, (\ref{WilsonCP}) is invalid if the prime $p\equiv 1\pmod{4}$, though Chapman and Pan also determined $\prod_{k=1}^{p-1}[k]_{q^k}$ modulo $[p]_q$ for those prime $p\equiv 1\pmod{4}$, with help of the fundamental
unit and the class number of the quadratic field $\Q(\sqrt{p})$.

In this short note, we shall try to obtain a unified $q$-analogue of Wilson's congruence for all primes, from the viewpoint of combinatorics. Let $\cS_n$ denote the permutation group of order $n$, i.e., the set of all permutations over $\{1,2,\ldots,n\}$. 
Clearly $|\cS_n|=n!$.
For each $\sigma\in\cS_n$, define the major index of $\sigma$
$$
\maj\sigma:=\sum_{\substack{1\leq i\leq n-1\\ \sigma(i)>\sigma(i+1)}}i
$$
and the inversion number $\sigma$
$$
\inv\sigma:=\big|\{(i,j):\,1\leq i<j\leq n,\ \sigma(i)>\sigma(j)\}\big|.
$$
It is know (cf. \cite{}) that
\begin{align}
[n]_q!=\sum_{\sigma\in\cS_n}q^{\maj\sigma}
\end{align}
and
\begin{align}
[n]_q!=\sum_{\sigma\in\cS_n}q^{\inv\sigma}.
\end{align}

Let
$$
\cC_n:=\{\sigma\in\cS_n:\,\sigma\text{ is a cycle of length }n\}.
$$
We also have $|\cC_n|=(n-1)!$. Define
\begin{equation}
\ff_n(q):=\sum_{\sigma\in\cC_{n+1}}q^{\maj\sigma}.
\end{equation}
Clearly $\ff_n(q)$ is another $q$-analogue of the factorial $n!$, too. 
In this note, we shall prove a $q$-analogue of Wilson's congruence for $\ff_n(q)$.
Define the M\"obius function
$$
\mu(n):=\begin{cases}
1,&\text{if }n=1,\\
(-1)^k,&\text{if }n=p_1\cdots p_k\text{ where }p_1,\ldots,p_k\text{ are distinct primes},\\
0&\text{if }n>1\text{ is not square-free}.
\end{cases}
$$
\begin{theorem}\label{qWilson} Suppose that $n\geq 2$. Then
\begin{equation}\label{qWilsonne}
\ff_{n-1}(q)\equiv\mu(n)\pmod{\Phi_n(q)}.
\end{equation}
In particular, if $p$ is prime, then
\begin{equation}
\ff_{p-1}(q)\equiv-1\pmod{[p]_q}.
\end{equation}
\end{theorem}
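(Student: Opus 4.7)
The plan is to evaluate $\ff_{n-1}(q)$ at a primitive $n$-th root of unity $\zeta$ and prove that the resulting sum
$$\sum_{\sigma\in\cC_n}\zeta^{\maj\sigma}$$
equals $\mu(n)$; since $\Phi_n(q)$ is the minimal polynomial of $\zeta$ over $\Q$ and $\ff_{n-1}(q)\in\Z[q]$, this integer-valued identity will immediately yield the congruence modulo $\Phi_n(q)$.

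The main idea is to exploit the conjugation action of the cyclic group $\langle c\rangle$ on $\cC_n$, where $c:=(1,2,\ldots,n)$. Introduce the ``cyclic descent'' statistic $\widetilde{\des}(\sigma):=\des(\sigma)+\mathbf{1}[\sigma(n)>\sigma(1)]$. A direct, though slightly intricate, case analysis of how the one-line notation of $\sigma$ transforms under $\sigma\mapsto c\sigma c^{-1}$ (a cyclic left-shift of positions combined with a ``$+1\bmod n$'' shift on values, in which $n$ wraps back to $1$) is expected to establish two key facts: first, that $\widetilde{\des}$ is invariant under $\langle c\rangle$-conjugation; and second, that
$$\maj(c\sigma c^{-1})\equiv\maj(\sigma)+\widetilde{\des}(\sigma)-1\pmod{n}.$$
The second fact is the delicate step, as the wraparound $n\mapsto 1$ reshuffles descents by one position and occasionally adds a stray multiple of $n$ to the exact difference, but this stray term vanishes modulo $n$.

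Given these two facts, iteration yields $\maj(c^k\sigma c^{-k})\equiv\maj(\sigma)+k(\widetilde{\des}(\sigma)-1)\pmod{n}$. For an orbit $\Omega\subseteq\cC_n$ of size $d$ (which divides $n$), the identity $c^d\sigma c^{-d}=\sigma$ forces $n\mid d(\widetilde{\des}(\sigma)-1)$, so the orbit sum
$$\sum_{\tau\in\Omega}\zeta^{\maj\tau}=\zeta^{\maj\sigma}\sum_{k=0}^{d-1}\zeta^{k(\widetilde{\des}(\sigma)-1)}$$
is a truncated geometric series that vanishes whenever $\widetilde{\des}(\sigma)\ne 1$. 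But $\widetilde{\des}(\sigma)=1$ is equivalent to the cyclic sequence $\sigma(1),\ldots,\sigma(n)$ having a unique descent, which forces $\sigma$ to be a cyclic rotation of $1,2,\ldots,n$, i.e., $\sigma\in\langle c\rangle$.

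Therefore only the singleton orbits in $\cC_n\cap\langle c\rangle=\{c^k:1\le k\le n-1,\ \gcd(k,n)=1\}$ survive. Since $c^k$ has one descent (at position $n-k$), we have $\maj(c^k)=n-k$, and hence
$$\ff_{n-1}(\zeta)=\sum_{\substack{1\le k\le n-1\\\gcd(k,n)=1}}\zeta^{n-k}=\sum_{\substack{1\le j\le n\\\gcd(j,n)=1}}\zeta^{j}=\mu(n),$$
the final equality being the classical evaluation of the Ramanujan sum $c_n(1)$. The main obstacle is the case analysis underlying the second key fact: tracking how descents reorganize when the value $n$ is forced to wrap to $1$, and confirming that the exact major-index difference reduces to $\widetilde{\des}(\sigma)-1$ modulo $n$.
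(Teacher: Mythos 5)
Your proposal is correct and is essentially the paper's own proof: your conjugation action $\sigma\mapsto c\sigma c^{-1}$ is the paper's operator $T$, your cyclic statistic $\widetilde{\des}$ is the paper's $\overline{\des}$, your two ``key facts'' are exactly the paper's Lemma on $\overline{\maj}\,T\sigma$ and $\overline{\des}\,T\sigma$, and the orbit-wise geometric-series cancellation, the identification of the surviving permutations as the rotations $c^k$ with $(k,n)=1$ having $\maj c^k=n-k$, and the Ramanujan-sum evaluation $\sum_{(r,n)=1}\zeta^r=\mu(n)$ all match the paper's argument. The only cosmetic difference is that you evaluate at a single primitive root $\zeta$ and invoke the minimality of $\Phi_n(q)$ at the outset, whereas the paper carries polynomial congruences modulo $\Phi_n(q)$ throughout and only passes to roots of unity at the final step.
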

In fact, as we shall see later, the M\"obius function $\mu$ in (\ref{qWilsonne}) arises from the Ramanujan sum
$$
\sum_{\substack{1\leq r\leq n\\ (r,n)=1}}e^{2\pi i\cdot \frac{kr}{n}}.
$$

\section{Proof of Theorem \ref{qWilson}}
\setcounter{lemma}{0} \setcounter{theorem}{0}
\setcounter{equation}{0}

Suppose that $n\geq 2$. Let $\Z_n=\Z/n\Z$ be the cyclic group of order $n$. Below we always identify $\Z_n$ with $\{1,2,\ldots,n\}$, and view $\cS_n$ as the permutation group over $\Z_n$. In particular, for each $1\leq a,b\leq n$, we say $a<b$ over $\Z_n$ if and only if $a<b$ over $\Z$.

Let $\tau\in\cS_n$ be defined by $$\tau(a)=a+1$$ for each $a\in\{1,2,\ldots,n\}$. For each $\sigma\in\cS_n$, let
$$
T\sigma:=\tau\circ\sigma\circ\tau^{-1}.
$$
Then we have $T\cC_n=\cC_n$. In fact, for each $\sigma=(a_1,a_2,\ldots,a_n)\in\cC_n$, 
$$
T\sigma=(a_1+1,a_2+2,\ldots,a_n+1).
$$
Thus $T$ can be viewed as a group action on $\cC_n$, since clearly $T^n\sigma=\sigma$ for each  $\sigma\in\cC_n$. 

For each $\sigma\in\cC_n$, let
$$
U_\sigma:=\{T^k\sigma:\,1\leq k\leq n\}
$$ 
denote the orbit of $\sigma$, where $T^k$ denotes the $k$-th iteration of $T$. Then we may partition $\cC_n$ into
$$
\cC_n=\bigcup_{\sigma\in X}U_\sigma,
$$
where $X\subseteq\cC_n$ satisfies that $\sigma\not\in U_\kappa$ for any distinct $\sigma,\kappa\in X$. 
Since $T$ is a group action, we must have $|U_\sigma|$ divides $n$ for each $\sigma\in X$.
\begin{lemma}\label{Tsigmasigma}
Suppose that $\sigma\in\cC_n$. If $T\sigma=\sigma$, then there exists $1\leq r\leq n-1$ such that
$$
\sigma(a)=a+r
$$
for each $a\in\Z_n$.
\end{lemma}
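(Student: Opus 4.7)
\medskip

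\noindent\textbf{Proof plan.} The hypothesis $T\sigma=\sigma$ unpacks as $\tau\circ\sigma=\sigma\circ\tau$, i.e., $\sigma$ commutes with the cyclic shift $\tau$. Writing this out pointwise on $\Z_n$ gives the key functional equation
\begin{equation*}
\sigma(a+1)=\sigma(a)+1\qquad\text{for every }a\in\Z_n.
\end{equation*}
So the first step is simply to rewrite the hypothesis in this form by evaluating $\tau\sigma\tau^{-1}=\sigma$ at an arbitrary point and using $\tau^{-1}(a)=a-1$.

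Next, I would iterate: an easy induction on $k\ge 0$ gives $\sigma(a+k)=\sigma(a)+k$ for all $a,k$ (all arithmetic in $\Z_n$). Fixing a base point, say $a=1$, and setting $r:=\sigma(1)-1\pmod n$, this yields
\begin{equation*}
\sigma(a)=\sigma(1)+(a-1)=a+r
\end{equation*}
for every $a\in\Z_n$, which is the form claimed.

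Finally, I would verify $r\not\equiv 0\pmod n$, so that $r$ can be chosen in $\{1,2,\ldots,n-1\}$. Indeed, if $r\equiv 0$ then $\sigma$ is the identity permutation on $\Z_n$, which decomposes into $n$ fixed points and hence is not a single cycle of length $n$ (recall $n\geq 2$), contradicting $\sigma\in\cC_n$. This gives the desired range $1\le r\le n-1$.

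There is essentially no obstacle here; the only mild subtlety is being careful about the identification of $\Z_n$ with $\{1,2,\ldots,n\}$ and making sure the induction from $\sigma(a+1)=\sigma(a)+1$ to $\sigma(a)=a+r$ is stated over the cyclic group so that no boundary issues at $a=n$ arise. Note also that the lemma as stated does not claim $\gcd(r,n)=1$; that stronger fact (which must hold because $\sigma$ is actually an $n$-cycle) is presumably extracted in the subsequent argument when it is needed.
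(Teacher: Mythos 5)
Your proof is correct and follows essentially the same route as the paper: unpack $T\sigma=\sigma$ into the pointwise relation $\sigma(a+1)=\sigma(a)+1$ (equivalently, $\sigma(a)-a$ is constant on $\Z_n$) and read off $r$. In fact you are slightly more complete than the paper, since you also verify $r\not\equiv 0\pmod{n}$ (ruling out the identity via $\sigma\in\cC_n$, $n\geq 2$), a point the paper's proof leaves implicit.
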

\begin{proof}
According to the definition of $T$, we have
$$
T\sigma(a+1)=\sigma(a)+1
$$
for each $a\in\Z_n$. Since $T\sigma=\sigma$,
$$
\sigma(a)-a=T\sigma(a+1)-(a+1)=\sigma(a+1)-(a+1)
$$
for each $a\in\Z_n$.
\end{proof}
Let $\lambda_r\in\cS_n$ be the permutation such that $\lambda_r(a)=a+r$ for each $a\in\Z_n$. It is not difficult to verify that $\lambda_r\in\cC_n$ if and only if $(r,n)=1$. According to Lemma \ref{Tsigmasigma}, for each $\sigma\in X$, $|U_\sigma|=1$ if and only if $\sigma=\lambda_r$ for some $r$ prime to $n$. That is,
\begin{equation}\label{partitionCn}
\cC_n=\{\lambda_r:\,1\leq r\leq n,\ (r,n)=1\}\cup\bigcup_{\substack{\sigma\in X\\ |U_\sigma|>1}}U_\sigma.
\end{equation}

Recall that
$$
\maj\sigma=\sum_{\substack{1\leq i\leq n-1\\ \sigma(i)>\sigma(i+1)}}i.
$$
Define
$$
\overline{\maj}\sigma=\sum_{\substack{1\leq i\leq n\\ \sigma(i)>\sigma(i+1)}}i.
$$
Notice that
$$
\overline{\maj}\sigma=\begin{cases}\maj\sigma+n,&\text{if }\sigma(n)>\sigma(1),\\
\maj\sigma,&\text{otherwise}.
\end{cases}
$$
So we always have
$$
\overline{\maj}\sigma\equiv\maj\sigma\pmod{n}.
$$
\begin{lemma}\label{majdesT}
Suppose that $\sigma\in\cC_n$. Then
\begin{equation}\label{majTmajdes}
\overline{\maj} T\sigma\equiv\overline{\maj} \sigma+\overline{\des}\sigma-1\pmod{n},
\end{equation}
where
$$
\overline{\des}\sigma=\sum_{\substack{1\leq i\leq n\\ \sigma(i)>\sigma(i+1)}}1.
$$
Furthermore,
\begin{equation}\label{desTdes}
\overline{\des} T\sigma=\overline{\des}\sigma.
\end{equation}
\end{lemma}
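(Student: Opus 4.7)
The plan is to track how the cyclic descent set
$$D(\sigma):=\{i\in\Z_n:\sigma(i)>\sigma(i+1)\}$$
(so that $\overline{\des}\sigma=|D(\sigma)|$ and $\overline{\maj}\sigma=\sum_{i\in D(\sigma)}i$) transforms under $T$. First I would use $\tau^{-1}(i)=i-1$ to write
$$T\sigma(i)=\tau(\sigma(i-1))=\sigma(i-1)+1$$
with arithmetic in $\Z_n$. The key subtlety is that adding $1$ in $\Z_n$ preserves the usual order on $\{1,\dots,n\}$ everywhere except at the value $n$, which flips down to $1$.

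Let $k\in\Z_n$ be the unique index with $\sigma(k)=n$. For every $i\neq k,k+1$, both $\sigma(i-1)$ and $\sigma(i)$ lie in $\{1,\dots,n-1\}$, so the comparison survives the shift and
$$i\in D(T\sigma)\ \Longleftrightarrow\ \sigma(i-1)>\sigma(i)\ \Longleftrightarrow\ i-1\in D(\sigma).$$
At the two exceptional indices: $T\sigma(k+1)=1<T\sigma(k+2)$, so $k+1\notin D(T\sigma)$, whereas $k\in D(\sigma)$ because $\sigma(k)=n$; and $T\sigma(k)=\sigma(k-1)+1\geq 2>1=T\sigma(k+1)$, so $k\in D(T\sigma)$, whereas $k-1\notin D(\sigma)$ because $\sigma(k-1)<n=\sigma(k)$. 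Writing $D(\sigma)+1$ for the cyclic shift $\{j+1\bmod n:j\in D(\sigma)\}$, this gives
$$D(T\sigma)=\bigl((D(\sigma)+1)\setminus\{k+1\}\bigr)\cup\{k\},$$
with $k+1\in D(\sigma)+1$ and $k\notin D(\sigma)+1$, so this really is a ``swap'' of one element for another.

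The cardinality is therefore preserved, which is exactly (\ref{desTdes}). For (\ref{majTmajdes}), summing over the shifted set gives
$$\sum_{j\in D(\sigma)+1}j\equiv\sum_{j\in D(\sigma)}(j+1)=\overline{\maj}\sigma+\overline{\des}\sigma\pmod n$$
(any wraparound of $j=n$ to $1$ only changes the sum by a multiple of $n$), and the replacement of $k+1$ by $k$ then subtracts $1$ modulo $n$. The main obstacle is purely bookkeeping: the modular wraparound appears in two different places at once (the positional shift $j\mapsto j+1$, and the value flip when $\sigma(k)=n$), so one must consistently work in $\Z_n$ and only reduce modulo $n$ at the end for the identity to emerge cleanly.
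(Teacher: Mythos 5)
Your proof is correct and follows essentially the same route as the paper: locate the position $k$ (the paper's $i_0$) where $\sigma$ takes the value $n$, observe that the comparison $T\sigma(i)>T\sigma(i+1)$ reduces to $\sigma(i-1)>\sigma(i)$ away from the two exceptional indices $k,k+1$, and account for the swap of one descent position there. Your descent-set formulation $D(T\sigma)=\bigl((D(\sigma)+1)\setminus\{k+1\}\bigr)\cup\{k\}$ is just a cleaner packaging of the paper's index-shifted sums, and both yield (\ref{desTdes}) and (\ref{majTmajdes}) by the same bookkeeping.
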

\begin{proof}
If $\sigma(i-1),\sigma(i)\neq n$, then
$$
T\sigma(i)>T\sigma(i+1)\Longleftrightarrow\sigma(i-1)+1
>\sigma(i)+1\Longleftrightarrow\sigma(i-1)>\sigma(i).
$$
Assume that $\sigma(i_0)=n$.
Clearly $$
T\sigma(i_0+1)=n+1=1<T\sigma(i_0+2),
$$ 
as well as $T\sigma(i_0)>T\sigma(i_0+1)$. 
Hence
$$
\overline{\maj} T\sigma=i_0+\sum_{\substack{1\leq i\leq n\\
i\neq i_0,i_0+1\\
T\sigma(i)>T\sigma(i+1)}}i=i_0+\sum_{\substack{1\leq i\leq n\\
i\neq i_0,i_0+1\\
\sigma(i-1)>\sigma(i)}}i,
$$
where we identify $\sigma(0)$ with $\sigma(n)$. Apparently
$$
\sum_{\substack{1\leq i\leq n\\
i\neq i_0,i_0+1\\
\sigma(i-1)>\sigma(i)}}i=
\sum_{\substack{0\leq i\leq n-1\\
i\neq i_0-1,i_0\\
\sigma(i)>\sigma(i+1)}}(i+1)\equiv\sum_{\substack{1\leq i\leq n\\
i\neq i_0-1,i_0\\
\sigma(i)>\sigma(i+1)}}(i+1)\pmod{n}.
$$
It follows that
$$
\overline{\maj} T\sigma\equiv
i_0+\sum_{\substack{1\leq i\leq n\\
i\neq i_0-1,i_0\\
\sigma(i)>\sigma(i+1)}}i+\sum_{\substack{1\leq i\leq n\\
i\neq i_0-1\\
\sigma(i)>\sigma(i+1)}}1\pmod{n}.
$$
Finally, since $\sigma(i_0)=n$ is greater than $\sigma(i_0-1)$ and $\sigma(i_0+1)$, we have
$$
i_0+\sum_{\substack{1\leq i\leq n\\
i\neq i_0-1,i_0\\
\sigma(i)>\sigma(i+1)}}i=\overline{\maj}\sigma
$$
and
$$
\sum_{\substack{1\leq i\leq n\\
i\neq i_0-1,i_0\\
\sigma(i)>\sigma(i+1)}}1=\overline{\des}\sigma-1.
$$
(\ref{majTmajdes}) is concluded.

Similarly, we also have
$$
\overline{\des} T\sigma=1+\sum_{\substack{1\leq i\leq n\\
i\neq i_0,i_0+1\\
T\sigma(i)>T\sigma(i+1)}}1=1+\sum_{\substack{1\leq i\leq n\\
i\neq i_0-1,i_0\\
\sigma(i)>\sigma(i+1)}}=\overline{\des} \sigma.
$$
\end{proof}

\begin{lemma}\label{dessigma1} For each $\sigma\in\cC_n$,
$\overline{\des}\sigma=1$ if and only if $\sigma=\lambda_r$ for some $r$ prime to $n$.
\end{lemma}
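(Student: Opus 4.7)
The plan is to prove both implications directly; the essential observation is that a cyclic sequence of distinct values has a unique descent precisely when it is a rotation of a strictly increasing linear sequence.

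For the ``if'' direction I would simply unwind the definitions. If $\sigma=\lambda_r$ with $(r,n)=1$, then identifying values in $\Z_n$ with $\{1,\ldots,n\}$ one has $\sigma(i+1)=\sigma(i)+1$ except at the unique index $i_0$ where $\sigma(i_0)=n$ and $\sigma(i_0+1)=1$. This is the sole cyclic descent, so $\overline{\des}\lambda_r=1$.

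The converse is the main content. Suppose $\sigma\in\cC_n$ and $\overline{\des}\sigma=1$, with the unique cyclic descent at index $i_0$. Then for every $i\in\{1,\ldots,n\}\setminus\{i_0\}$ one has $\sigma(i)<\sigma(i+1)$, so the length-$n$ linear sequence
\[
\sigma(i_0+1),\,\sigma(i_0+2),\,\ldots,\,\sigma(i_0+n),
\]
with indices read in $\Z_n$, is strictly increasing. Being a bijection with $\{1,2,\ldots,n\}$, it must equal $1,2,\ldots,n$ in that order, which forces $\sigma(i_0+k)=k$ for each $k$, i.e., $\sigma=\lambda_r$ with $r\equiv-i_0\pmod{n}$. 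Since $\sigma\in\cC_n$, the remark following Lemma \ref{Tsigmasigma} then yields $(r,n)=1$.

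The main point to watch is the bookkeeping between the $n$ cyclic comparisons counted by $\overline{\des}$ and the $n-1$ linear ones inside the rotated window: by starting the rotation immediately after the unique cyclic descent, the single cyclic defect becomes no linear defects at all, and the monotonicity argument then proceeds uneventfully. I do not anticipate further obstacles.
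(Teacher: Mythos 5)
Your proof is correct and follows essentially the same route as the paper: the paper also rotates the cycle so that the unique cyclic descent (located via $\sigma(i_0)=n$) sits at the boundary, forces the remaining $n-1$ comparisons to form a strictly increasing sequence equal to $1,2,\ldots,n$, and concludes $\sigma=\lambda_{n-i_0}$, with $(r,n)=1$ coming from the remark that $\lambda_r\in\cC_n$ iff $(r,n)=1$. Your write-up is in fact slightly more complete, since you also spell out the easy ``if'' direction, which the paper leaves implicit.
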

\begin{proof}
Assume that $\sigma(i_0)=n$. Clearly $i_0\neq n$, otherwise $\sigma\not\in\cC_n$. Since $\overline{\des}\sigma=1$ and $\sigma(i_0)=n>\sigma(i_0+1)$, we must have 
$$
\sigma(i_0+1)<\sigma(i_0+2)<\cdots<\sigma(n)<\sigma(1)<\sigma(2)<\cdots<\sigma(i_0-1).
$$
Since $\sigma(i_0+1)\geq 1$ and $\sigma(i_0-1)\leq b=n-1$,
we obtain that $\sigma(i_0+j)=j$ for each $1\leq j\leq n-i_0$ and $\sigma(k)=n-i_0+k$ for each $1\leq k\leq i_0-1$, i.e., $\sigma=\lambda_{n-i_0}$.
\end{proof}

Now we are ready to prove Theorem \ref{qWilson}. 
In view of (\ref{partitionCn}),
$$
\ff_{n-1}(q)=
\sum_{\substack{1\leq r\leq n\\ (r,n)=1}}q^{\maj\lambda_r}+
\sum_{\substack{\sigma\in X\\ |U_\sigma|>1}}\sum_{\upsilon\in U_\sigma}q^{\maj \upsilon}.
$$
Suppose that $\sigma\in X$ and $|U_\sigma|\geq 2$. By Lemma \ref{dessigma1}, we have $\overline{\des}\sigma\geq 2$. Let $h=|U_\sigma|$. According to Lemma \ref{majdesT},
\begin{align*}
\sum_{\upsilon\in U_\sigma}q^{\maj \upsilon}\equiv\sum_{k=0}^{h-1}q^{\overline{\maj} T^k\sigma}\equiv&
q^{\overline{\maj} \sigma}\sum_{k=0}^{h-1}q^{k(\overline{\des}\sigma-1)}\\
=&
q^{\overline{\maj} \sigma}\cdot\frac{1-q^{h(\overline{\des}\sigma-1)}}{1-q^{\overline{\des}\sigma-1}}\pmod{\Phi_n(q)}.
\end{align*}
Since $1\leq \overline{\des}\sigma-1\leq n-1$, $1-q^{\overline{\des}\sigma-1}$ is not divisible by $\Phi_n(q)$. On the other hand, $T^h\sigma=\sigma$ since  $|U_\sigma|=h$. So, by Lemma \ref{majdesT}, we must have
$$
h(\overline{\des}\sigma-1)\equiv0\pmod{n},
$$
i.e.,
$$
1-q^{h(\overline{\des}\sigma-1)}\equiv0\pmod{\Phi_n(q)}.
$$
Thus for each $\sigma\in X$ with $|U_\sigma|>1$, we have
$$
\sum_{\upsilon\in U_\sigma}q^{\maj \upsilon}\equiv0\pmod{\Phi_n(q)}.
$$
It follows that
$$
\ff_{n-1}(q)\equiv
\sum_{\substack{1\leq r\leq n\\ (r,n)=1}}q^{\maj\lambda_r}=
\sum_{\substack{1\leq r\leq n\\ (r,n)=1}}q^{n-r}=\sum_{\substack{1\leq r\leq n\\ (r,n)=1}}q^r\pmod{\Phi_n(q)}.
$$

Finally, it suffices to show that
$$
\sum_{\substack{1\leq r\leq n\\ (r,n)=1}}q^r\equiv\mu(n)\pmod{\Phi_n(q)}.
$$
Let $\zeta$ be a $n$-th primitive root of unity. Then
$$
\Phi_n(q)=\prod_{\substack{1\leq k\leq n\\ (k,n)=1}}(q-\zeta^k).
$$
So we only need to prove that for each $1\leq k\leq n$ with $(k,n)=1$,
$$
\lim_{q\to\zeta^k}\sum_{\substack{1\leq r\leq n\\ (r,n)=1}}q^r=\mu(n),
$$
which immediately follows from our knowledge of Ramanujan's sum.
All are done.\qed

\end{document}